\newtheorem{theorem}{Theorem}[section]
\newtheorem{proposition}[theorem]{Proposition}
\title{The probability measure corresponding to 2-plane trees}
\author{Wojciech M{\l}otkowski, Karol A. Penson}
\thanks{
W.~M. is supported by the Polish
National Science Center grant No. 2012/05/B/ST1/00626.
K.~A.~P. acknowledges support from PAN/CNRS  under Project PICS No.
4339 and from Agence Nationale de la Recherche (Paris, France) under
Program PHYSCOMB No. ANR-08-BLAN-0243-2.}
\address{Instytut Matematyczny,
Uniwersytet Wroc{\l}awski,
Plac~Grunwaldzki~2/4,
50-384 Wroc{\l}aw, Poland}
\email{mlotkow@math.uni.wroc.pl}
\address{Laboratoire de Physique Th\'{e}orique de la Mati\`{e}re
Condens\'{e}e (LPTMC), Universit\'{e} Pierre et Marie Curie, CNRS UMR
7600, Tour 13 - 5i\`{e}me \'{e}t., Bo\^{i}te Courrier 121, 4 place
Jussieu, F 75252 Paris Cedex 05, France}
\email{penson@lptl.jussieu.fr}
\subjclass[2010]{Primary 44A60; Secondary 46L54}
\keywords{beta distribution, free convolution, Meijer function}
\begin{document}

\begin{abstract}
We study the probability measure $\mu_{0}$ for which the moment sequence
is $\binom{3n}{n}\frac{1}{n+1}$. We prove that $\mu_{0}$ is absolutely continuous,
find the density function and prove that $\mu_{0}$
is infinitely divisible with respect to the additive free convolution.
\end{abstract}

\maketitle

\section{Introduction}

A \textit{$2$-plane tree} is a planted plane tree such that each vertex is colored black or white
and for each edge at least one of its ends is white. Gu and Prodinger \cite{guprodinger2009}
proved, that the number of 2-plane trees on $n+1$ vertices with black (white) root
is $\binom{3n+1}{n}\frac{1}{3n+1}$ (Fuss-Catalan number of order $3$, sequence A001764 in OEIS \cite{oeis})
and $\binom{3n+2}{n}\frac{2}{3n+2}$ (sequence A006013 in OEIS) respectively
(see also \cite{guprodingerwagner2010}). We are going to study the sequence
\begin{equation}\label{aintsuma}
\binom{3n}{n}\frac{2}{n+1}=
\binom{3n+1}{n}\frac{1}{3n+1}+\binom{3n+2}{n}\frac{2}{3n+2},
\end{equation}
which begins with
\[
2, 3, 10, 42, 198, 1001, 5304, 29070, 163438,\ldots,
\]
of total numbers of such trees (A007226 in OEIS).

Both the sequences on the right hand side of (\ref{aintsuma})
are positive definite (see \cite{mlotkowski2010,mlopezy2013}),
therefore so is the sequence $\binom{3n}{n}\frac{2}{n+1}$ itself.
In this paper we are going to study the corresponding probability
measure $\mu_{0}$, i.e. such that the numbers $\binom{3n}{n}\frac{1}{n+1}$
are moments of $\mu_0$.
First we prove that $\mu_0$ is Mellin convolution of two
beta distributions, in particular $\mu_0$ is absolutely continuous.
Then we find the density function of $\mu_0$.
In the last section we prove, that $\mu_0$
can be decomposed as additive free convolution
$\mu_{1}\boxplus\mu_{2}$ of two measures,
which are both infinitely divisible with respect to $\boxplus$
and are related to the Marchenko-Pastur distribution.
In particular, the measure $\mu_0$ itself is $\boxplus$-infinitely
divisible.

\section{The generating function}

Let us consider the generating function
\[
G(z)=\sum_{n=0}^{\infty}\binom{3n}{n}\frac{2z^n}{n+1}.
\]
According to (\ref{aintsuma}), $G$ is a sum of two generating functions.
The former is usually denoted by $\mathcal{B}_{3}$:
\[
\mathcal{B}_{3}(z)=\sum_{n=0}^{\infty}\binom{3n+1}{n}\frac{z^n}{3n+1}
\]
and satisfies equation
\begin{equation}\label{aintbfunction}
\mathcal{B}_{3}(z)=1+z\cdot\mathcal{B}_{3}(z)^3.
\end{equation}
Lambert's formula (see (5.60) in \cite{gkp}) implies, that the latter
is just square of $\mathcal{B}_{3}$:
\[
\mathcal{B}_{3}(z)^{2}=\sum_{n=0}^{\infty}\binom{3n+2}{n}\frac{2z^n}{3n+2},
\]
so we have
\begin{equation}\label{aintgbfunction}
G(z)=\mathcal{B}_{3}(z)+\mathcal{B}_{3}(z)^{2}.
\end{equation}

Combining (\ref{aintbfunction}) and (\ref{aintgbfunction}),
we obtain the following equation for $G$:
\begin{equation}\label{aintgfunctionequation}
2-z-(1+2z)G(z)+2zG(z)^2-z^2G(z)^3=0,
\end{equation}
which will be applied later on.

Now we will give formula for $G(z)$.

\begin{proposition}
For the generating function of the sequence (\ref{aintsuma}) we have
\begin{equation}\label{aintgeneratingfunction}
G(z) 
=\frac{12\cos^2\alpha+6}{\left(4\cos^2\alpha-1\right)^2},
\end{equation}
where $\alpha=\frac{1}{3}\arcsin\left(\sqrt{27z/4}\right)$.
\end{proposition}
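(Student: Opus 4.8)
The plan is to obtain a closed trigonometric parametrization of $\mathcal{B}_{3}$ first, and then read off $G$ from the relation $G=\mathcal{B}_{3}+\mathcal{B}_{3}^{2}$ in (\ref{aintgbfunction}). Writing $y=1/\mathcal{B}_{3}(z)$, equation (\ref{aintbfunction}) becomes the cubic $y^{3}-y^{2}+z=0$. I would remove the quadratic term by setting $y=t+\tfrac13$, which turns it into the depressed cubic $t^{3}-\tfrac13 t+\left(z-\tfrac{2}{27}\right)=0$, i.e. $t^{3}-\tfrac13 t=\tfrac{2}{27}-z$.

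Next I would apply the classical trigonometric solution of a cubic with three real roots: substituting $t=\tfrac23\cos\varphi$ and using $4\cos^{3}\varphi-3\cos\varphi=\cos 3\varphi$, the equation collapses to $\cos 3\varphi=1-\tfrac{27z}{2}$. Rewriting $1-\cos 3\varphi=2\sin^{2}\tfrac{3\varphi}{2}$ gives $\sin\tfrac{3\varphi}{2}=\sqrt{27z/4}$, so the choice $\varphi=2\alpha$ with $\alpha=\tfrac13\arcsin\sqrt{27z/4}$ solves it. Hence $t=\tfrac23\cos 2\alpha$ and
\[
y=t+\tfrac13=\frac{2\cos 2\alpha+1}{3}=\frac{4\cos^{2}\alpha-1}{3},
\]
using $\cos 2\alpha=2\cos^{2}\alpha-1$, which yields $\mathcal{B}_{3}(z)=3/(4\cos^{2}\alpha-1)$. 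Substituting this into (\ref{aintgbfunction}) gives
\[
G(z)=\frac{3}{4\cos^{2}\alpha-1}+\frac{9}{\left(4\cos^{2}\alpha-1\right)^{2}}
=\frac{3\left(4\cos^{2}\alpha-1\right)+9}{\left(4\cos^{2}\alpha-1\right)^{2}}
=\frac{12\cos^{2}\alpha+6}{\left(4\cos^{2}\alpha-1\right)^{2}},
\]
which is (\ref{aintgeneratingfunction}).

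The one genuine subtlety — the step I expect to be the main obstacle — is branch selection: the cubic has three roots, hence three analytic branches, and one must confirm that the branch picked out by $\alpha=\tfrac13\arcsin\sqrt{27z/4}$ (principal $\arcsin$, $z$ near $0$) is the one agreeing with the power series $\mathcal{B}_{3}(z)=1+z+\dots$. This is settled by noting that at $z=0$ one gets $\alpha=0$, hence $4\cos^{2}\alpha-1=3$ and $\mathcal{B}_{3}(0)=1$, and that both sides are analytic near the origin and satisfy (\ref{aintbfunction}); so the identity propagates to a neighbourhood of $0$ and then everywhere by analytic continuation. An alternative, purely computational route would bypass $\mathcal{B}_{3}$: substitute the claimed formula for $G$ directly into the cubic (\ref{aintgfunctionequation}) and check that it reduces to an identity in $\cos\alpha$ together with $G(0)=2$; but I find the derivation through $\mathcal{B}_{3}$ cleaner and more transparent.
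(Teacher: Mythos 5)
Your proof is correct, but it takes a genuinely different route from the paper. You work directly with the algebraic relations already recorded in the text: from (\ref{aintbfunction}) you pass to the cubic $y^{3}-y^{2}+z=0$ for $y=1/\mathcal{B}_{3}(z)$, solve it by the classical trigonometric (Vi\`ete) method to get $\mathcal{B}_{3}(z)=3/(4\cos^{2}\alpha-1)$, and then read off $G$ from (\ref{aintgbfunction}); the branch question is settled, as you say, by evaluating at $z=0$ (where $y=1$ is a simple root of $y^3-y^2=0$, so the analytic solution with $y(0)=1$ is unique) and noting that $\cos^{2}\alpha$ is an even function of $\sqrt{27z/4}$, hence analytic in $z$ near $0$. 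The paper instead rewrites the coefficients $\binom{3n}{n}\frac{2}{n+1}$ in Pochhammer form to express $G$ through ${}_{2}F_{1}\!\left(-\tfrac23,-\tfrac13;-\tfrac12\,\middle|\,\tfrac{27z}{4}\right)$ and then invokes a closed-form evaluation of that hypergeometric function (proved via the hypergeometric differential equation), finishing with the same substitution $\alpha=\tfrac13\arcsin\sqrt{27z/4}$. Your derivation is more elementary and self-contained --- it needs nothing beyond the cubic functional equation, the triple-angle identity and a one-line branch check --- whereas the paper's route has the side benefit of producing an explicit hypergeometric representation of $G$, in the same spirit as the ${}_{2}F_{1}$ identities used later for the density $V(x)$. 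Either argument establishes (\ref{aintgeneratingfunction}).
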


\begin{proof}
Denoting $(a)_n:=a(a+1)\ldots(a+n-1)$ we have
\[
\frac{2(3n)!}{(n+1)!(2n)!}=
\frac{-2\left(\frac{-2}{3}\right)_{n+1}\left(\frac{-1}{3}\right)_{n+1}27^{n+1}}{3(n+1)!\left(\frac{-1}{2}\right)_{n+1}4^{n+1}}.
\]
Therefore
\[
G(z)=\frac{2-2\cdot {}_{2}F_{1}\!\left(\left.\frac{-2}{3},\frac{-1}{3};\frac{1}{2}\right|\frac{27z}{4}\right)}{3z}.
\]
Now we apply formula
\[
{}_{2}F_{1}\!\left(\left.\frac{-2}{3},\frac{-1}{3};\frac{-1}{2}\right|u\right)
=\frac{1}{3}\sqrt{u}\sin\left(\frac{1}{3}\arcsin\left(\sqrt{u}\right)\right)
+\sqrt{1-u}\cos\left(\frac{1}{3}\arcsin\left(\sqrt{u}\right)\right),
\]
which can be proved by hypergeometric equation
(note that both the functions $w\mapsto w\sin\left(\frac{1}{3}\arcsin\left(w\right)\right)$,
$w\mapsto\cos\left(\frac{1}{3}\arcsin\left(w\right)\right)$
are even, so the right hand side is well defined for $|u|<1$).
Putting $\alpha=\frac{1}{3}\arcsin\left(\sqrt{u}\right)$, $u=27z/4$, we have
$\sqrt{u}=\sin 3\alpha$, $\sqrt{1-u}=\cos 3\alpha$, which
after elementary calculations gives (\ref{aintgeneratingfunction}).
\end{proof}

\section{The measure}

In this part we are going to study the (unique) measure $\mu_{0}$ for which
$\left\{\binom{3n}{n}\frac{1}{n+1}\right\}_{n=0}^{\infty}$ is the moment sequence.
We will show that $\mu_0$ can be expressed as the Mellin convolution
of two beta distributions.
Then we will provide explicit formula for the density function $V(x)$ of $\mu_{0}$.

Recall (see \cite{balakrishnannevzorov}),
that for $\alpha,\beta>0$, the \textit{beta distribution} $\mathrm{Beta}(\alpha,\beta)$
is the absolutely continuous probability measure 
defined by the density function
\[
f_{\alpha,\beta}(x)
=\frac{\Gamma(\alpha+\beta)}{\Gamma(\alpha)\Gamma(\beta)}\cdot x^{\alpha-1}(1-x)^{\beta-1},
\]
for $x\in(0,1)$. The moments of $\mathrm{Beta}(\alpha,\beta)$ are
\[
\int_{0}^{1} x^n f_{\alpha,\beta}(x)\,dx=\frac{\Gamma(\alpha+\beta)\Gamma(\alpha+n)}{\Gamma(\alpha)\Gamma(\alpha+\beta+n)}
=\prod_{i=0}^{n-1}\frac{\alpha+i}{\alpha+\beta+i}.
\]

For probability measures $\nu_1$, $\nu_2$
on the positive half-line $[0,\infty)$ the \textit{Mellin convolution} is defined by
\begin{equation}
\left(\nu_1\circ\nu_2\right)(A):=\int_{0}^{\infty}\int_{0}^{\infty}\chi_{A}(xy)d\nu_1(x)d\nu_{2}(y)
\end{equation}
for every Borel set $A\subseteq[0,\infty)$ ($\chi_{A}$ denotes
the indicator function of the set $A$).
This is the distribution of the product $X_1\cdot X_2$ of two independent nonnegative
random variables with $X_i\sim\nu_i$.
In particular,
if $c>0$ then $\nu\circ\delta_c$ is the \textit{dilation} of $\nu$:
\[
\left(\nu\circ\delta_c\right)(A)=\mathbf{D}_c\nu(A):=\nu\left(\frac{1}{c}A\right),
\]
where $\delta_{c}$ denotes the Dirac delta measure at $c$.

If both the measures $\nu_1,\nu_2$ have all \textit{moments}
\[
s_n(\nu_i):=\int_{0}^{\infty}x^n\,d\nu_i(x)
\]
finite then so has $\nu_1\circ\nu_2$ and
\[
s_n\left(\nu_1\circ\nu_2\right)=s_n(\nu_1)\cdot s_n(\nu_2)
\]
for all $n$.
The method of Mellin convolution has been recently applied
to a number of related problems, see for example
\cite{mlopezy2013,pezy}.

Now we can describe the probability measure corresponding
to the sequence $\binom{3n}{n}\frac{1}{n+1}$.

\begin{proposition}
Define $\mu_0$ as the Mellin convolution
\begin{equation}\label{ameamuzerobeta}
\mu_0=\mathrm{Beta}(1/3,1/6)\circ\mathrm{Beta}(2/3,4/3)\circ\delta_{27/4}.
\end{equation}
Then the numbers $\binom{3n}{n}\frac{1}{n+1}$ are moments of $\mu_0$:
\[
\int_{0}^{27/4} x^n\,d\mu_0(x)=\binom{3n}{n}\frac{1}{n+1}.
\]
\end{proposition}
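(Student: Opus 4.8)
The plan is to use the multiplicativity of moments under Mellin convolution, together with the moment formula for the beta distribution recalled above. Iterating $s_n(\nu_1\circ\nu_2)=s_n(\nu_1)\,s_n(\nu_2)$ and using $s_n(\delta_{27/4})=(27/4)^n$, one gets
\[
s_n(\mu_0)=\left(\frac{27}{4}\right)^{n}s_n\bigl(\mathrm{Beta}(1/3,1/6)\bigr)\cdot s_n\bigl(\mathrm{Beta}(2/3,4/3)\bigr).
\]
By the formula $s_n(\mathrm{Beta}(\alpha,\beta))=(\alpha)_n/(\alpha+\beta)_n$, the two factors are $(1/3)_n/(1/2)_n$ and $(2/3)_n/(2)_n$, hence
\[
s_n(\mu_0)=\left(\frac{27}{4}\right)^{n}\frac{(1/3)_n\,(2/3)_n}{(1/2)_n\,(2)_n}.
\]

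The remaining task is to recognize the right-hand side as $\binom{3n}{n}\frac{1}{n+1}$. For this I would invoke two standard consequences of the Gauss multiplication formula for $\Gamma$: the triplication identity $27^{n}(1/3)_n(2/3)_n(1)_n=(3n)!$, that is $27^{n}(1/3)_n(2/3)_n=(3n)!/n!$, and the duplication identity $(1/2)_n=(2n)!/(4^{n}n!)$. Combining these with $(2)_n=(n+1)!$ gives
\[
\left(\frac{27}{4}\right)^{n}\frac{(1/3)_n\,(2/3)_n}{(1/2)_n\,(2)_n}
=\frac{(3n)!/n!}{\bigl((2n)!/n!\bigr)(n+1)!}
=\frac{(3n)!}{(2n)!\,(n+1)!}
=\binom{3n}{n}\frac{1}{n+1},
\]
which is the claim. (Equivalently, the same identification can be read off from the Pochhammer expression for $\tfrac{2(3n)!}{(n+1)!(2n)!}$ already used in the proof of the generating-function proposition, after shifting the index from $n+1$ back to $n$.)

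Finally I would record why the integral may be written over $[0,27/4]$: both beta factors are supported on $(0,1)$, so their Mellin convolution is supported on $(0,1)$, and dilation by $27/4$ moves the support into $[0,27/4]$; the moment integrals are therefore finite and taken over that interval. I do not expect any genuine obstacle here: the content of the argument is entirely the multiplicativity of Mellin moments, and the only thing demanding care is the routine bookkeeping with the Gamma-function duplication and triplication identities used to pass from the product of Pochhammer symbols to the closed form $\binom{3n}{n}\frac{1}{n+1}$.
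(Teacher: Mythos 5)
Your proof is correct and follows essentially the same route as the paper: multiplicativity of moments under Mellin convolution plus the beta moment formula, reducing everything to the Pochhammer identity $\left(\tfrac{27}{4}\right)^{n}\frac{(1/3)_n(2/3)_n}{(1/2)_n(2)_n}=\frac{(3n)!}{(2n)!\,(n+1)!}$. The paper simply states this identity as "sufficient to check," whereas you verify it explicitly via the triplication and duplication formulas, which is a fine (and slightly more complete) write-up of the same argument.
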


\begin{proof}
It is sufficient to check that
\[
\frac{(3n)!}{(n+1)!(2n)!}=
\prod_{i=0}^{n-1}\frac{1/3+i}{1/2+i}\cdot
\prod_{i=0}^{n-1}\frac{2/3+i}{2+i}\cdot
\left(\frac{27}{4}\right)^n.
\]
\end{proof}

In view of formula (\ref{ameamuzerobeta}), the measure $\mu_0$ is absolutely continuous
and its support is the interval $[0,27/4]$.
Now we are going to find the density function $V(x)$ of $\mu_0$.

\begin{theorem}
Let
\begin{align*}
V(x)=\frac{\sqrt{3}}{2^{10/3}\pi x^{2/3}}\left(3\sqrt{1-4x/27}-1\right)&\left(1+\sqrt{1-4x/27}\right)^{1/3}\\
+\frac{1}{2^{8/3}\pi x^{1/3}\sqrt{3}}\left(3\sqrt{1-4x/27}+1\right)&\left(1+\sqrt{1-4x/27}\right)^{-1/3},
\end{align*}
$x\in(0,27/4)$. Then $V$ is the density function of $\mu_0$, i.e.
\[
\int_{0}^{27/4} x^n\, V(x)\,dx=\binom{3n}{n}\frac{1}{n+1}
\]
for $n=0,1,2,\ldots$.
\end{theorem}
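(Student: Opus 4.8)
The plan is to recover $V$ from the boundary values of the Cauchy transform of $\mu_0$, exploiting the algebraic description of $G$ obtained in Section~2. Since $\binom{3n}{n}\frac{2}{n+1}$ is twice the $n$-th moment of $\mu_0$, the generating function is tied to the Cauchy transform $M(w):=\int_{0}^{27/4}(w-x)^{-1}\,d\mu_0(x)$ by
\[
G(z)=2\int_{0}^{27/4}\frac{d\mu_0(x)}{1-xz}=\frac{2}{z}\,M\!\left(\frac1z\right),\qquad\text{so}\qquad M(w)=\frac{1}{2w}\,G\!\left(\frac1w\right),
\]
and $G$ is holomorphic off the cut $[4/27,\infty)$, which corresponds to $[0,27/4]$ in the $w$-variable. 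As $\mu_0$ is already known to be absolutely continuous, the Stieltjes--Perron inversion formula gives, for $x\in(0,27/4)$,
\[
V(x)=-\frac1\pi\lim_{\varepsilon\downarrow0}\operatorname{Im}M(x+i\varepsilon)=-\frac{1}{2\pi x}\lim_{\varepsilon\downarrow0}\operatorname{Im}G\!\left(\frac{1}{x+i\varepsilon}\right),
\]
so everything reduces to finding the boundary value of $G$ on its cut.

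For this I would use the defining equations (\ref{aintbfunction})--(\ref{aintgbfunction}) (the closed form (\ref{aintgeneratingfunction}) works equally well, continuing $\arcsin$ past $1$ via $\arcsin t=\tfrac\pi2\mp i\operatorname{arccosh}t$). Put $B:=\mathcal{B}_3(1/w)$; then (\ref{aintbfunction}) becomes $B^{3}-wB+w=0$ and (\ref{aintgbfunction}) gives $M(w)=\frac{B+B^{2}}{2w}$. For $w=x\in(0,27/4)$ this depressed cubic has discriminant $x^{2}(4x-27)<0$, hence one real and two conjugate complex roots; Cardano's formula, after the reduction $\sqrt{27-4x}=3\sqrt3\,u$ with $u:=\sqrt{1-4x/27}$, displays the roots as $-a-b$ and $\tfrac{a+b}{2}\pm\tfrac{\sqrt3}{2}(b-a)\,i$, where
\[
a=\Bigl(\tfrac{x}{2}(1-u)\Bigr)^{1/3}=\tfrac32(1-u)^{2/3}(1+u)^{1/3},\qquad b=\Bigl(\tfrac{x}{2}(1+u)\Bigr)^{1/3}=\tfrac32(1-u)^{1/3}(1+u)^{2/3}.
\]
The root we want --- the one for which $M=(B+B^{2})/2x$ has nonpositive imaginary part, as a Cauchy transform evaluated from the upper half-plane must --- is $B=\tfrac{a+b}{2}-\tfrac{\sqrt3}{2}(b-a)\,i$; a short computation gives $\operatorname{Im}(B+B^{2})=-\tfrac{\sqrt3}{2}(b-a)(1+a+b)$, whence
\[
V(x)=\frac{\sqrt3\,(b-a)(1+a+b)}{4\pi x}.
\]

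It then remains to rewrite this in the asserted form. Setting $P:=(1+u)^{1/3}$, $Q:=(1-u)^{1/3}$, so that $PQ=(1-u^{2})^{1/3}=(4x/27)^{1/3}$ (which eliminates $x$), and expanding $(b-a)(1+a+b)=(b-a)+(b^{2}-a^{2})$, the right-hand side becomes a rational expression in $P,Q$ which, after clearing denominators, coincides with the stated two-term formula; the single relation needed at the end is $P^{3}+Q^{3}=(1+u)+(1-u)=2$. (Equivalently, with $R:=\bigl(\tfrac{1+u}{1-u}\bigr)^{1/3}$ one obtains $V(x)=\tfrac{1}{12\sqrt3\,\pi}\bigl(R^{2}+2R-2R^{-1}-R^{-2}\bigr)$, and inserting $u=\tfrac{R^{3}-1}{R^{3}+1}$ and $x=\tfrac{27}{4}(1-u)(1+u)$ into the claimed $V$ turns $\tfrac{3u-1}{1-u}$ into $R^{3}-2$ and $\tfrac{3u+1}{1+u}$ into $2-R^{-3}$, giving the same thing.) Since $\mu_0$ is determined by its moments, this identifies $V$ as its density.

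The step I expect to be the main obstacle is the branch bookkeeping above: confirming that the boundary value of the principal branch of $G$ along the cut is precisely the Cardano root singled out, so that the resulting $V$ is nonnegative, and tracking every sign through $M(w)=\frac{B+B^{2}}{2w}$; the remainder is routine algebra with cube roots. As an inversion-free cross-check, one may instead substitute $x=\tfrac{27}{4}(1-u^{2})$ directly into $\int_{0}^{27/4}x^{n}V(x)\,dx$ and evaluate the resulting integrals over $u\in(0,1)$.
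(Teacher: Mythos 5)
Your proposal is correct, and it reaches the stated formula by a genuinely different route from the paper. The paper works entirely on the Mellin side: it rewrites $\binom{3n}{n}\frac{1}{n+1}$ as a ratio of Gamma factors via the Gauss--Legendre multiplication formula, recognizes the inverse Mellin transform as a Meijer $G^{2,0}_{2,2}$ function, verifies via formula 2.24.2.1 of Prudnikov et al.\ that its Mellin transform reproduces $\psi(s)$, and then uses Slater's expansion plus a closed-form ${}_2F_1$ evaluation to obtain the two-term expression. You instead invert the Cauchy transform: using $G=\mathcal{B}_3+\mathcal{B}_3^2$ and the cubic (\ref{aintbfunction}), you solve $B^3-xB+x=0$ by Cardano, select the root with nonpositive imaginary part, and apply Stieltjes--Perron. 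Your computations check out: with $a=\tfrac32(1-u)^{2/3}(1+u)^{1/3}$, $b=\tfrac32(1-u)^{1/3}(1+u)^{2/3}$ one indeed gets $V(x)=\frac{\sqrt3\,(b-a)(1+a+b)}{4\pi x}$, and the reduction to the stated formula does hinge exactly on $P^3+Q^3=2$, as you say. What each approach buys: the paper's route needs no analytic continuation or branch tracking, works straight from the moment sequence, and is the template that generalizes to the Raney/Fuss--Catalan densities in the authors' related work; yours is more elementary and self-contained (Cardano plus Stieltjes inversion, no special-function tables), exploits the algebraic equation already derived in Section~2, and meshes naturally with the Cauchy-transform machinery of the free-probability section.

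One point to tighten, which you yourself flag: the Herglotz criterion ``$\operatorname{Im}M\le 0$'' alone does not exclude the \emph{real} Cardano root $-a-b$, whose contribution to the imaginary part is zero. To rule it out, note that the discriminant $x^2(4x-27)$ is nonvanishing on $(0,27/4)$, so the three root branches stay separated and the boundary value of the (continuous) Cauchy transform must follow a single branch on the whole interval; if it were the real branch, Stieltjes inversion would give $\mu_0\big((0,27/4)\big)=0$, contradicting the absolute continuity and full support $[0,27/4]$ already established from (\ref{ameamuzerobeta}) (or, more simply, contradicting the moments). With that observation, and strict negativity of the imaginary part then forcing the conjugate root you chose, the branch bookkeeping closes and the argument is complete.
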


The density $V(x)$ of $\mu_0$ is represented in Fig.~1.B.

\begin{proof}
Putting $n=s-1$ and applying the Gauss-Legendre multiplication formula
\[
\Gamma(mz)=(2\pi)^{(1-m)/2}m^{mz-1/2}\Gamma(z)\Gamma\left(z+\frac{1}{m}\right)
\Gamma\left(z+\frac{2}{m}\right)\ldots\Gamma\left(z+\frac{m-1}{m}\right)
\]
we obtain
\[
\binom{3n}{n}\frac{1}{n+1}
=\frac{\Gamma(3n+1)}{\Gamma(n+2)\Gamma(2n+1)}
=\frac{\Gamma(3s-2)}{\Gamma(s+1)\Gamma(2s-1)}
\]
\[
=\frac{2}{27}\sqrt{\frac{3}{\pi}}\left(\frac{27}{4}\right)^s\frac{\Gamma(s-2/3)\Gamma(s-1/3)}{\Gamma(s-1/2)\Gamma(s+1)}
:=\psi(s).
\]
Then $\psi$ can be extended to an analytic function
on the complex plane, except the points
$1/3-n$, $2/3-n$, $n=0,1,2,\ldots$.

Now we are going to apply a particular type of the Meijer $G$-function,
see \cite{prudnikov3} for details.
Let $\widetilde{V}$ denote the inverse Mellin transform of $\psi$. Then we have
\begin{align*}
\widetilde{V}(x)&=\frac{1}{2\pi\mathrm{i}}\int_{c-\mathrm{i}\infty}^{c+\mathrm{i}\infty}x^{-s}\psi(s)\,ds\\
&=\frac{2}{27}\sqrt{\frac{3}{\pi}}\,\frac{1}{2\pi\mathrm{i}}\int_{c-\mathrm{i}\infty}^{c+\mathrm{i}\infty}
\frac{\Gamma(s-2/3)\Gamma(s-1/3)}{\Gamma(s-1/2)\Gamma(s+1)}\left(\frac{4x}{27}\right)^{-s}\,ds\\
&=\frac{2}{27}\sqrt{\frac{3}{\pi}}\,
G^{2,0}_{2,2}\left(\frac{4x}{27}\left|\!\!
\begin{array}{cc}
-1/2,\!\!&\!\!1\\-2/3,\!\!&\!\!-1/3
\end{array}\!\!\!
\right.\right),
\end{align*}
where $x\in(0,27/4)$ (consult \cite{sneddon} for the role of $c$ in the integrals).
On the other hand, for the parameters of the $G$-function we have
\[(-2/3-1/3)-(-1/2+1)=-3/2<0\] and hence the assumptions of formula
2.24.2.1 in \cite{prudnikov3} are satisfied. Therefore
we can apply the Mellin transform on $\widetilde{V}(x)$:
\begin{align*}
\int_{0}^{27/4} x^{s-1}\widetilde{V}(x)\,dx
=\frac{2}{27}\sqrt{\frac{3}{\pi}}&\int_{0}^{27/4} x^{s-1}
G^{2,0}_{2,2}\left(\frac{4x}{27}\left|\!\!
\begin{array}{cc}
-1/2,\!\!&\!\!1\\-2/3,\!\!&\!\!-1/3
\end{array}\!\!\!
\right.\right)dx\\
=\frac{2}{27}\sqrt{\frac{3}{\pi}} \left(\frac{27}{4}\right)^{s}&\int_{0}^{1} u^{s-1}
G^{2,0}_{2,2}\left(u\left|\!\!
\begin{array}{cc}
-1/2,\!\!&\!\!1\\-2/3,\!\!&\!\!-1/3
\end{array}\!\!\!
\right.\right)du=\psi(s)
\end{align*}
whenever $\Re s>2/3$. Consequently, $\widetilde{V}=V$.

Now we use Slater's formula (see \cite{prudnikov3}, formula 8.2.2.3) and express $V$ in terms of the hypergeometric functions:
\[
V(x)=\frac{2}{27}\sqrt{\frac{3}{\pi}}\,\frac{\Gamma(1/3)}{\Gamma(1/6)\Gamma(5/3)}\left(\frac{4x}{27}\right)^{-2/3}
{}_{2}F_{1}\!\left(\left.\frac{-2}{3},\frac{5}{6};\frac{2}{3}\right|\frac{4x}{27}\right)
\]
\[
+\frac{2}{27}\sqrt{\frac{3}{\pi}}\,\frac{\Gamma(-1/3)}{\Gamma(-1/6)\Gamma(4/3)}\left(\frac{4x}{27}\right)^{-1/3}
{}_{2}F_{1}\!\left(\left.\frac{-1}{3},\frac{7}{6};\frac{4}{3}\right|\frac{4x}{27}\right)
\]
\[
=\frac{\sqrt{3}}{4\pi x^{2/3}}\,\,
{}_{2}F_{1}\!\left(\left.\frac{-2}{3},\frac{5}{6};\frac{2}{3}\right|\frac{4x}{27}\right)
+\frac{1}{2\pi\sqrt{3}x^{1/3}}\,\,
{}_{2}F_{1}\!\left(\left.\frac{-1}{3},\frac{7}{6};\frac{4}{3}\right|\frac{4x}{27}\right).
\]
Applying the formula
\[
{}_{2}F_{1}\!\left(\left.
\frac{t-2}{2},\frac{t+1}{2};\,t\right|z\right)
=\frac{2^t}{2t}\left(t-1+\sqrt{1-z}\right)\left(1+\sqrt{1-z}\right)^{1-t}
\]
(see \cite{mlopezy2013}) for $t=2/3$ and $t=4/3$ we conclude the proof.
\end{proof}

\section{Relations with free probability}

In this part we are going to describe relations of $\mu_0$
with free probability. In particular we will show
that $\mu_{0}$ is infinitely divisible with respect
to the additive free convolution.

Let us briefly describe the additive and multiplicative free
convolutions. For details we refer to \cite{vdn,ns}.

Denote by $\mathcal{M}^c$ the class of probability
measures on $\mathbb{R}$ with compact support.
For $\mu\in\mathcal{M}^c$, with moments
\[
s_m(\mu):=\int_{\mathbb{R}} t^m\,d\mu(t),
\]
and with the
\textit{moment generating function}:
\[
M_{\mu}(z):=\sum_{m=0}^{\infty}s_m(\mu)z^m
=\int_{\mathbb{R}}\frac{d\mu(t)}{1-tz},
\]
we define its \textit{$R$-transform} $R_{\mu}(z)$ by the equation
\begin{equation}\label{cfreertransform}
R_{\mu}\big(z M_{\mu}(z)\big)+1=M_{\mu}(z).
\end{equation}
Then the \textit{additive free convolution} of
$\mu',\mu''\in\mathcal{M}^c$
is defined as the unique $\mu'\boxplus\mu''\in\mathcal{M}^c$ which satisfies
\[
R_{\mu'\boxplus\mu''}(z)=R_{\mu'}(z)+R_{\mu''}(z).
\]

If the support of $\mu\in\mathcal{M}^c$ is contained in
the positive halfline $[0,+\infty)$ then we define its
\textit{$S$-transform} $S_{\mu}(z)$ by
\begin{equation}\label{cfreemsrtransforms}
M_{\mu}\left(\frac{z}{1+z}S_{\mu}(z)\right)=1+z
\qquad\hbox{or}\qquad R_{\mu}\big(z S_{\mu}(z)\big)=z.
\end{equation}
on a neighborhood of $0$.
If $\mu',\mu''$ are such measures then their
\textit{multiplicative free convolution} $\mu'\boxtimes\mu''$
is defined by
\[
S_{\mu'\boxtimes\mu''}(z)=S_{\mu'}(z)\cdot S_{\mu''}(z).
\]

Recall, that for dilated measure we have:
$M_{\mathbf{D}_{c}\mu}(z)=M_{\mu}(cz)$, $R_{\mathbf{D}_{c}\mu}(z)=R_{\mu}(cz)$
and $S_{\mathbf{D}_{c}\mu}(z)=S_{\mu}(z)/c$.
The operations $\boxplus$ and $\boxtimes$ can be regarded as free analogs
of the classical and Mellin convolution.

For $t>0$ let $\varpi_t$ denote the \textit{Marchenko-Pastur distribution} with parameter $t$:
\begin{equation}
\varpi_t=\max\{1-t,0\}\delta_0+\frac{\sqrt{4t-(x-1-t)^2}}{2\pi x}\,dx,
\end{equation}
with the absolutely continuous part supported on
$\left[(1-\sqrt{t})^2,(1+\sqrt{t})^2\right]$.
Then
\begin{align}
M_{\varpi_t}(z)&=\frac{2}{1+z-tz+\sqrt{\big(1-z-tz\big)^2-4tz^2}}\label{cfreemvarpi}\\
&=1+\sum_{n=1}^{\infty}z^n\sum_{k=1}^{n}
\binom{n}{k}\binom{n}{k-1}\frac{t^k}{n},
\end{align}
\begin{equation}\label{cfreersfreepoisson}
R_{\varpi_t}(z)=\frac{tz}{1-z},\qquad\qquad S_{\varpi_t}(z)=\frac{1}{t+z}.
\end{equation}
In free probability the measures $\varpi_{t}$ play the role of the Poisson
distributions. Note that from (\ref{cfreersfreepoisson})
the family $\{\varpi_{t}\}_{t>0}$ constitutes a semigroup
with respect to $\boxplus$, i.e.
we have $\varpi_{s}\boxplus\varpi_{t}=\varpi_{s+t}$ for $s,t>0$.

\begin{theorem}
The measure $\mu_{0}$ is equal to the additive free convolution
$\mu_{0}=\mu_{1}\boxplus\mu_{2}$, where $\mu_1=\mathbf{D}_{2}\varpi_{1/2}$, so that
\begin{align}
\mu_1&=\frac{1}{2}\delta_{0}+\frac{\sqrt{8-(x-3)^2}}{4\pi x}\chi_{(3-\sqrt{8},3+\sqrt{8})}(x)\,dx,\label{cfreemu1}\\
\intertext{and $\mu_2=\frac{1}{2}\delta_{0}+\frac{1}{2}\varpi_{1}$, i.e.}
\mu_2&=\frac{1}{2}\delta_{0}+\frac{\sqrt{4x-x^2}}{4\pi x}\chi_{(0,4)}(x)\,dx.\label{cfreemu2}
\end{align}
The measures $\mu_1,\mu_2$ are infinitely divisible with respect
to the additive free convolution $\boxplus$, and consequently, so is $\mu_{0}$.
\end{theorem}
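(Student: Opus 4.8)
The plan is to verify the free convolution identity $\mu_0 = \mu_1 \boxplus \mu_2$ at the level of $R$-transforms, then establish $\boxplus$-infinite divisibility of $\mu_1$ and $\mu_2$ separately, from which the divisibility of $\mu_0$ is automatic. Since $\mu_1 = \mathbf{D}_2\varpi_{1/2}$, the dilation rule gives $R_{\mu_1}(z) = R_{\varpi_{1/2}}(2z) = \frac{(1/2)(2z)}{1-2z} = \frac{z}{1-2z}$ using \eqref{cfreersfreepoisson}. For $\mu_2 = \frac12\delta_0 + \frac12\varpi_1$, I would first compute $M_{\mu_2}(z) = \frac12 + \frac12 M_{\varpi_1}(z)$ from \eqref{cfreemvarpi}, then invert the defining relation \eqref{cfreertransform} to get $R_{\mu_2}$; a short computation should yield a rational function, which I expect to be $R_{\mu_2}(z) = \frac{z}{2(1-z)}$ or similar. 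Adding, $R_{\mu_0}(z) = R_{\mu_1}(z) + R_{\mu_2}(z)$ is then an explicit rational function of $z$.

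Next I would confirm this candidate $R$-transform genuinely corresponds to $\mu_0$. The cleanest route is to use the known functional equation for $G(z) = M_{\mu_0}(z)$, namely \eqref{aintgfunctionequation}, together with the relation \eqref{cfreertransform} in the form $R_{\mu_0}(zM_{\mu_0}(z)) = M_{\mu_0}(z) - 1$. Setting $w = zM_{\mu_0}(z)$ and $m = M_{\mu_0}(z)$, so that $z = w/m$, I would substitute into \eqref{aintgfunctionequation} and the proposed formula $R_{\mu_0}(w) = \frac{w}{1-2w} + \frac{w}{2(1-w)}$ (or whatever the sum turns out to be), and check that the two are compatible — i.e. that eliminating $m$ between $R_{\mu_0}(w) = m-1$ and the cubic \eqref{aintgfunctionequation} reproduces exactly the claimed rational expression in $w$. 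This is an elimination/resultant computation; it is routine but is the main place where an arithmetic slip could occur, so I would organize it by first writing $m = 1 + R_{\mu_0}(w)$ as an explicit rational function of $w$, then $z = w/m$, and plugging both into the cubic to verify it vanishes identically.

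For infinite divisibility, I would invoke the standard characterization: a compactly supported probability measure $\nu$ on $[0,\infty)$ is $\boxplus$-infinitely divisible if its free cumulant generating function admits a free Lévy–Khintchine representation, equivalently (for our purposes) if $R_\nu$ extends analytically with nonnegative "free Lévy measure." Concretely, both $\varpi_{1/2}$ and $\varpi_1$ are free-Poisson, hence $\boxplus$-infinitely divisible (they sit inside the semigroup $\{\varpi_t\}$, with $\varpi_t = \varpi_{t/k}^{\boxplus k}$), and dilation preserves $\boxplus$-infinite divisibility because $\mathbf{D}_c$ is an $\boxplus$-homomorphism; this handles $\mu_1$. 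For $\mu_2 = \frac12\delta_0 + \frac12\varpi_1$, I would identify it as a member of a free-convolution semigroup: its $R$-transform $\frac{z}{2(1-z)} = R_{\varpi_{1/2}}(z)\cdot\frac{1}{1}$... more carefully, I expect $R_{\mu_2}(z)$ to have the form $\int \frac{xz}{1-xz}\,d\rho(x)$ for a positive measure $\rho$, which is precisely the free Lévy–Khintchine form with no Gaussian part, establishing $\boxplus$-infinite divisibility; alternatively one checks directly that $\mu_2$ is the $\boxplus$-distribution at time $1$ of the free convolution semigroup generated by the free Lévy measure $\frac12\delta_1$. Then, since $\mu_1$ and $\mu_2$ are each $\boxplus$-infinitely divisible and $\boxplus$-infinite divisibility is closed under $\boxplus$, the measure $\mu_0 = \mu_1 \boxplus \mu_2$ inherits the property.

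The main obstacle I anticipate is not conceptual but computational: correctly carrying out the elimination between the cubic \eqref{aintgfunctionequation} and the candidate $R$-transform to confirm $R_{\mu_0} = R_{\mu_1} + R_{\mu_2}$, since this involves clearing denominators in a degree-three relation and matching a rational function. A secondary point requiring care is making the free Lévy–Khintchine representation of $\mu_2$ fully explicit — i.e., exhibiting the positive finite measure on $[0,\infty)$ whose transform is $R_{\mu_2}$ — rather than merely asserting $\boxplus$-infinite divisibility; but given the simple rational form of $R_{\mu_2}$ this should amount to reading off a point mass, so no genuine difficulty is expected there. I would also double-check the absolutely continuous densities in \eqref{cfreemu1} and \eqref{cfreemu2} against the general Marchenko–Pastur formula by substituting $t = 1/2$ (and dilating by $2$) and $t=1$ respectively, to make sure the stated supports $(3-\sqrt8, 3+\sqrt8)$ and $(0,4)$ and the prefactors are correct.
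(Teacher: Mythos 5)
Your overall architecture (work at the level of $R$-transforms, verify compatibility with the cubic \eqref{aintgfunctionequation}, then get $\boxplus$-infinite divisibility of $\mu_0$ from that of $\mu_1,\mu_2$) is sound, and it is essentially the paper's argument run in the reverse direction: the paper starts from \eqref{aintgfunctionequation}, computes $T$, $S_{\mu_0}$ and then $R_{\mu_0}(z)=\frac{4z-1+\sqrt{1-2z}}{2(1-2z)}$, and splits it, whereas you start from the candidate summands. But there is a concrete error at the heart of your plan: your expected formula $R_{\mu_2}(z)=\frac{z}{2(1-z)}$ is wrong. The $R$-transform linearizes free convolution, not convex mixtures, so $R_{\frac12\delta_0+\frac12\varpi_1}$ is not $\frac12 R_{\varpi_1}$; in fact $\frac{z}{2(1-z)}=R_{\varpi_{1/2}}(z)$, the $R$-transform of a different measure. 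Carrying out the inversion you describe (from $M_{\mu_2}(z)=\frac12+\frac{1}{1+\sqrt{1-4z}}$) gives $R_{\mu_2}(z)=\frac{1-\sqrt{1-2z}}{2\sqrt{1-2z}}$, which is not rational. With your guessed $R_{\mu_2}$ the identity $R_{\mu_0}=R_{\mu_1}+R_{\mu_2}$ already fails at the second free cumulant ($2+\tfrac12=\tfrac52$ versus $\kappa_2(\mu_0)=\tfrac{11}{4}$), so the elimination check you propose would not close; and your statement that the sum is ``an explicit rational function of $z$'' is false, since the correct sum carries the branch $\sqrt{1-2z}$, exactly as in the paper's formula for $R_{\mu_0}$.

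This error propagates into your infinite-divisibility step for $\mu_2$: the free L\'evy measure is not a point mass, and $\mu_2$ is not the time-one distribution of the semigroup generated by $\frac12\delta_1$ (that semigroup gives $\varpi_{1/2}$, whose absolutely continuous part lives on $(\tfrac32-\sqrt2,\tfrac32+\sqrt2)$, not on $(0,4)$). The correct route, which your general L\'evy--Khintchine idea can be repaired to follow, is the paper's: expand $R_{\mu_2}(z)=\sum_{n\ge1}\binom{2n}{n}2^{-n-1}z^n$, observe that the shifted cumulant sequence $\binom{2(n+2)}{n+2}2^{-n-3}$ is positive definite because $\binom{2n}{n}$ are the moments of the arcsine law, and invoke Theorem 13.16 of Nica--Speicher; equivalently, the representing measure $\rho$ in $R_{\mu_2}(z)=\int\frac{xz}{1-xz}\,d\rho(x)$ is (a dilate of) half the arcsine distribution, not $\frac12\delta_1$. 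Your treatment of $\mu_1$ (dilation rule giving $R_{\mu_1}(z)=\frac{z}{1-2z}$, infinite divisibility inherited from the free Poisson semigroup) and your density checks for \eqref{cfreemu1}--\eqref{cfreemu2} are fine.
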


The absolutely continuous parts of the measures $\mu_1,\mu_2$ are represented in Fig.~1.A.

\begin{proof}
The moment generating function of $\mu_0$ is $M_{\mu_{0}}(z)=G(z)/2$. Then we have $M_{\mu_{0}}(0)=1$ and
by (\ref{aintgfunctionequation})
\[
2-z-2(1+2z)M_{\mu_{0}}(z)+8zM_{\mu_{0}}(z)^2-8z^2M_{\mu_{0}}(z)^3=0.
\]

Let $T(z)$ be the inverse function for $M_{\mu_{0}}(z)-1$, so that $T(0)=0$ and $M_{\mu_{0}}\big(T(z)\big)=1+z$. Then
\[
2-T(z)+(-1-2T(z))2(1+z)+8T(z)(1+z)^2-8T(z)^2(1+z)^3=0,
\]
which gives
\[
8(1+z)^3T(z)^2-(8z^2+12z+3)T(z)+2z=0
\]
and finally
\[
T(z)=\frac{8z^2+12z+3-\sqrt{9+8z}}{16(1+z)^3}=\frac{4z}{8z^2+12z+3+\sqrt{9+8z}}.
\]
Therefore we can find the $S$-transform of $\mu_0$:
\[
S_{\mu_{0}}(z)=\frac{1+z}{z}T(z)=\frac{8z^2+12z+3-\sqrt{9+8z}}{16z(1+z)^2}=\frac{4(1+z)}{8z^2+12z+3+\sqrt{9+8z}}
\]
and from (\ref{cfreemsrtransforms}) we get the $R$-transform: 
\[
R_{\mu_0}(z)=\frac{4z-1+\sqrt{1-2z}}{2(1-2z)}.
\]
Now we observe that $R_{\mu_{0}}(z)$ can be decomposed as follows:
\[
R_{\mu_{0}}(z)=\frac{z}{1-2z}+\frac{1-\sqrt{1-2z}}{2\sqrt{1-2z}}=R_{1}(z)+R_{2}(z).
\]
Comparing with (\ref{cfreersfreepoisson}) we observe that $R_1(z)$ is the
$R$-transform of $\mu_{1}=\mathbf{D}_{2}\varpi_{1/2}$,
which implies that $\mu_{1}$ is $\boxplus$-infinitely divisible.

Consider the Taylor expansion of $R_2(z)$:
\[
R_{2}(z)=\sum_{n=1}^{\infty}\binom{2n}{n}2^{-n-1} z^n
=\frac{z}{2}+z^2\sum_{n=0}^{\infty}\binom{2(n+2)}{n+2}2^{-n-3}z^n.
\]
Since the numbers $\binom{2n}{n}$ are moments of the \textit{arcsine distribution}
\[\frac{1}{\pi\sqrt{x(4-x)}}\chi_{(0,4)}(x)\,dx,\]
the coefficients of the last sum constitute a positive definite sequence.
So $R_{2}(z)$ is $R$-transform of a probability measure $\mu_2$,
which is $\boxplus$-infinitely divisible (see Theorem~13.16 in \cite{ns}).
Now using (\ref{cfreertransform}) we obtain
\[
M_{\mu_{2}}(z)=\frac{1+2z-\sqrt{1-4z}}{4z}
=\frac{1}{2}+\frac{1-\sqrt{1-4z}}{4z}=\frac{1}{2}+\frac{1}{1+\sqrt{1-4z}}.
\]
Comparing with (\ref{cfreemvarpi}) for $t=1$ we see that $\mu_2=\frac{1}{2}\delta_{0}+\frac{1}{2}\varpi_{1}$.
\end{proof}

Let us now consider the measures $\mu_1,\mu_{2}$ separately.
For $\mu_1=\mathbf{D}_{2}\varpi_{1/2}$ the moment generating function is
\[
M_{\mu_{1}}(z)=\frac{2}{1+z+\sqrt{1-6z+z^2}}
=1+\sum_{n=1}^{\infty}z^n\sum_{k=1}^{n}\binom{n}{k}\binom{n}{k-1}\frac{2^{n-k}}{n},
\]
so the moments are
\[
1, 1, 3, 11, 45, 197, 903, 4279, 20793, 103049, 518859,\ldots.
\]
This is the A001003 sequence in OEIS (little Schroeder numbers),
$s_{n}(\mu_1)$ is the number of ways to insert parentheses in product of $n+1$ symbols.
There is no restriction on the number of pairs of parentheses.
The number of objects inside a pair of parentheses must be at least 2.

On the subject of $\mu_2$, applying (\ref{cfreemsrtransforms}) we can find the $S$-transform:
\[
S_{\mu_{2}}(z)=\frac{2(1+z)}{(1+2z)^2}=\frac{1+z}{1/2+z}\cdot\frac{1}{1+2z}.
\]
One can check, that $\frac{1+z}{1/2+z}$ is the $S$-transform
of $\frac{1}{2}\delta_0+\frac{1}{2}\delta_1$, which yields
\begin{equation}
\mu_2=\left(\frac{1}{2}\delta_0+\frac{1}{2}\delta_1\right)\boxtimes\mu_1.
\end{equation}

We would like to thank G.~Aubrun, C.~Banderier, K.~G\'{o}rska and H.~Prodinger
for fruitful interactions.

\begin{figure}
        \centering
        \begin{subfigure}[t]
        {0.44\textwidth}
                \centering
                \includegraphics[scale=0.4]{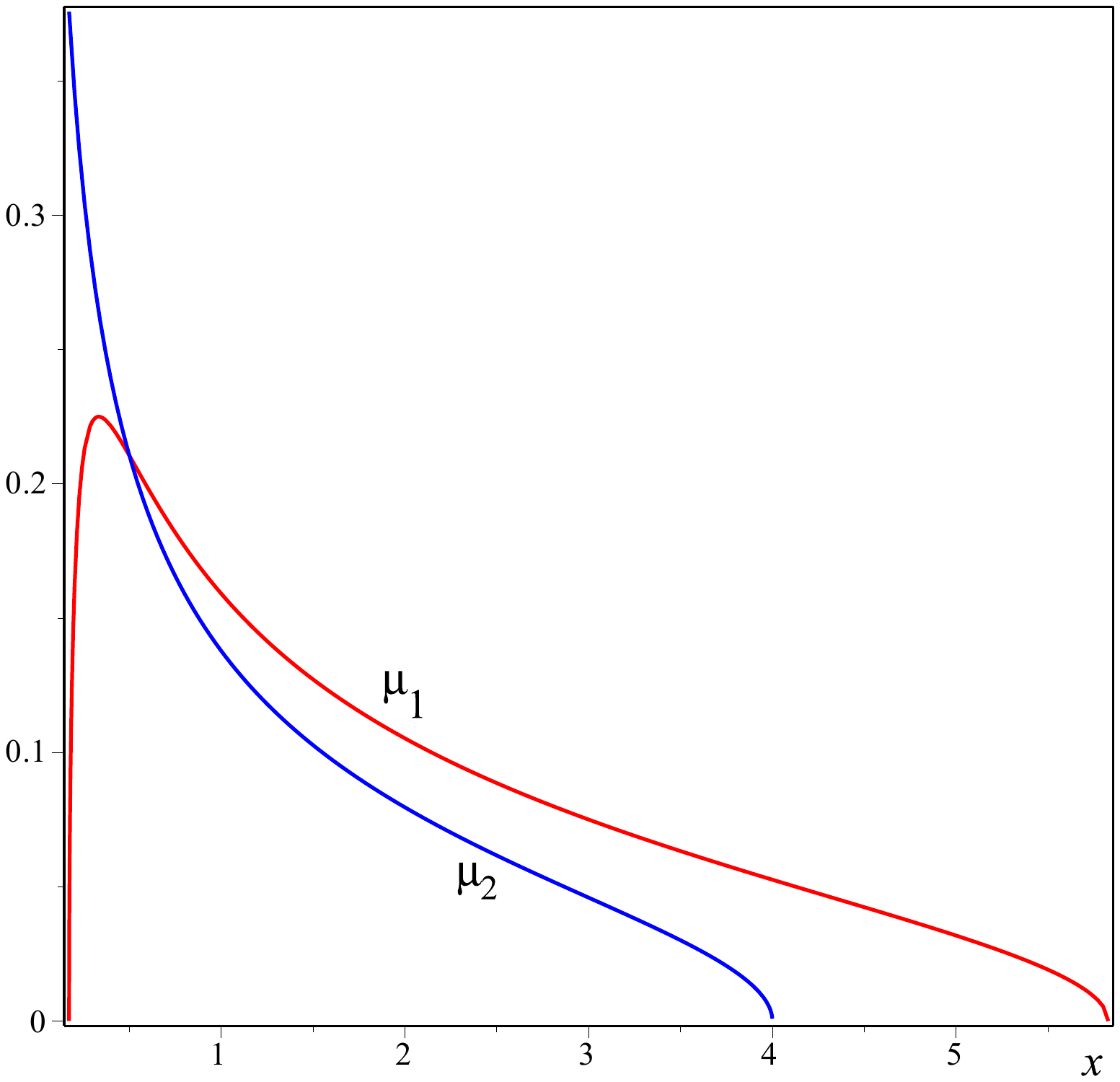}
                \caption{The densities of $\mu_1$, $\mu_2$}
                \label{mu1mu2}
        \end{subfigure}%
        ~ 
        \begin{subfigure}[t]{0.47\textwidth}
                \centering
                \includegraphics[scale=0.47]{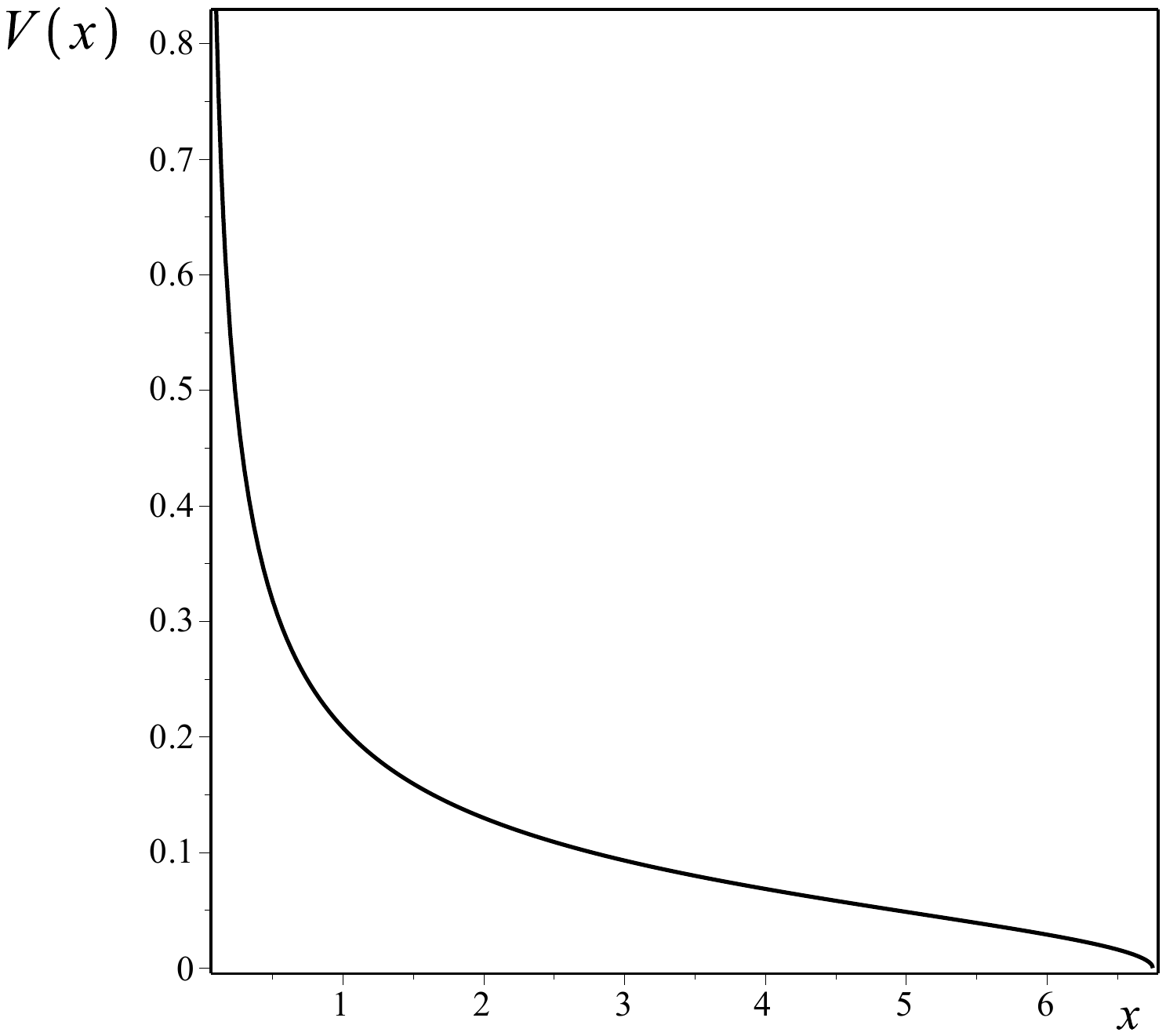}
                \caption{The density of $\mu_0=\mu_1\boxplus\mu_2$}
                \label{muzero}
        \end{subfigure}
        ~ 
        \caption{The densities of $\mu_1$, $\mu_2$ and $\mu_0=\mu_1\boxplus\mu_2$}\label{fig:animals}
\end{figure}

\end{document}